\renewcommand\section{\@startsection {section}{1}{\z@}%
                                   {-3.5ex \@plus -1ex \@minus -.2ex}%
                                   {2.3ex \@plus.2ex}%
                                   {\normalfont\Large\bfseries
                                    \setcounter{equation}{0}
                                    \setcounter{thm}{0}}}
\newtheorem{thm}{Theorem}
\newtheorem{lem}[thm]{Lemma}
\newtheorem{defn}[thm]{Definition}
\newcommand{\q}{\quad}
\newcommand{\qq}{\qquad}
\newcommand{\h }{\hspace*{18pt}}
\newcommand{\CommaBin}{\mathbin{\raisebox{0.5ex}{,}}}
\title{\textbf{Existence Results By The Method Of Upper And Lower Solutions For Volterra Integral Equation On Time Scales}}
\author{\small \textbf{Alaa E.\ Hamza}$^{1}$, \textbf{and Ahmed G.~Ghallab}$^{2}$  \\
\small$^\textbf{{1}}$ Department of Mathematics, Faculty of Science, Cairo University, Giza, Egypt.\\
\small E-mail: hamzaaeg2003@yahoo.com\\
\small$^\textbf{{2}}$ Department of Mathematics, Faculty of  Science,\\
 \small {Fayoum University, Fayoum, Egypt.}\\
 \small E-mail: agg00@fayoum.edu.eg}
\date{}
\begin{document}
 \thispagestyle{empty} \maketitle
 \begin{abstract}
 In this article, we investigate the method of upper and lower solutions for Volterra integral equation of the first kind on arbitrary time scale $\mathbb{T}$. We establish some existence results in a certain sector. Moreover, monotone iterative technique is used to obtain maximal and minimal solutions of the considered equation.
 \end{abstract}

\section{Introduction}

\h The method of upper and lower solution is a widely used tool in investigating qualitative properties for many classes of dynamical systems, see for instance \cite{Ladde},\cite{Zhang},\cite{Tong}. The main advantage of this method is the ability to obtain a sector where solutions of considered dynamical systems lie inside it. The upper and lower solutions form  as the upper and lower bound for that sector. Moreover, by using iteration scheme, called the monotone iterative technique, we can improve these bounds of the obtained sector to obtain extremal solutions.

 Throughout this article we shall use the method of upper and lower solution coupled with the method of iterative technique to establish existence and uniqueness of solutions to a certain integral equation of Volterra type on arbitrary time scale $\mathbb{T}$.


 We shall consider the following integral equation:
 \begin{equation}\label{eq41}
   x(t)=f(t)+\int_a^tk(t,s,x(s))\Delta s,\q  t\in I_\mathbb{T};
 \end{equation}
where $I_\mathbb{T}:= [a,b]\cap\mathbb{T}$ is a time scale interval, $f:I_\mathbb{T}\rightarrow\mathbb{R}$ , $k:I_\mathbb{T}\times I_\mathbb{T}\times \mathbb{R}\rightarrow \mathbb{R}$, and $x$ is the unknown function.

This article is organized as follows. Some basic concepts and notations of calculus on time scales are given in Section \textbf{2}. Section \textbf{3}, we investigate the existence and uniqueness of the solutions of equation \eqref{eq41} within the sector determined by the upper and lower solutions. In Section \textbf{4}, we use monotone iterative technique to establish a result about the extremal solutions of equation \eqref{eq41}.
\section{Preliminaries}
In this section we introduce some definitions, notations, and preliminary results which will be used throughout this article. For more details see \cite{Hilger,Boh1}.

\begin{defn}
A time scale $\mathbb{T}$ is a nonempty closed subset of the real numbers $\mathbb{R}$.
\end{defn}


\begin{defn}
The mappings $\sigma,\rho:\mathbb{T}\rightarrow \mathbb{T}$ defined by $\sigma(t)=\inf\{s\in \mathbb{T}: s>t\}$, and $\rho(t)=\sup\{s\in \mathbb{T}: s<t\}$ are called the jump operators.
\end{defn}
If $\mathbb{T}$ has a left scattered maximum $m$, then $\mathbb{T}^\kappa=\mathbb{T}-{m}$.
\begin{defn}
A function $f:\mathbb{T} \rightarrow \mathbb{R}$ is said to be delta differentiable at the point $t\in \mathbb{T}$ if there exist a number $f^\Delta(t)$ with the property that given any $\epsilon >0$ there is a neighborhood $U$ of $t$ with $\|[f(\sigma(t))-f(s)]-f^\Delta(t)[\sigma(t)-s]\|\leq \varepsilon |\sigma(t)-s|$ for all $s\in U$. The function $f^\Delta(t)$ is the delta derivative of $f$ at $t$.
\end{defn}

For $\mathbb{T}=\mathbb{R}$, we have $f^\Delta(t)=f'(t)$, and for $\mathbb{T}=\mathbb{Z}$, we have $f^\Delta(t)=\Delta f(t)= f(t+1)-f(t)$.

\begin{defn}
A function $F:\mathbb{T}\rightarrow\mathbb{R}$ is called an \emph{antiderivative} of $f:\mathbb{T}\rightarrow\mathbb{R}$ provided $F^\Delta(t)=f(t)$ for all $t\in \mathbb{T}^\kappa$. The $\Delta$-integral of $f$ is defined  by
$$
\int_r^s f(t)\Delta t= F(s)-F(r), \q \text{for all } r,s \in \mathbb{T}.
$$
\end{defn}

If $\mathbb{T}=\mathbb{R}$, then $\displaystyle\int_a^b f(s)\Delta s=\displaystyle\int_a^b f(s)ds$, while if $\mathbb{T}=\mathbb{Z}$, then $\displaystyle\int_a^b f(s)\Delta s=\displaystyle\sum_{s=a}^{b-1}f(s)$.

\begin{defn}
A function $f:\mathbb{T}\rightarrow\mathbb{R}$ is called  right-dense continuous (rd-continuous)  if $f$ is continuous at every right-dense point $t\in \mathbb{T}$ and the left-sided limits exist (i.e finite) at every left-dense point $t\in \mathbb{T}$. The family of all rd-continuous functions from $ \mathbb{T}$ to $\mathbb{R}$ is denoted by $ C_{rd}(\mathbb{T};\mathbb{R})$.
\end{defn}

The family  of all regressive functions is denoted by
$$
\mathcal{R}:= \Big\{p\in C_{rd}(\mathbb{T};\mathbb{R}) \ \text {and } 1+p(t)\mu(t)\neq 0, \ \forall \ t\in \mathbb{T}\Big\}\CommaBin
$$
and the set of positively regressive functions is denoted by
$$
\mathcal{R}^+:= \Big\{p\in C_{rd}(\mathbb{T};\mathbb{R}) \ \text {and } 1+p(t)\mu(t)> 0, \ \forall \ t\in \mathbb{T}\Big\}\cdot
$$
\begin{defn}
If $p\in\mathcal{R}$, then we define the generalized exponential function by
$$
e_p(t,s)=\exp\Big( \int_s^t \xi_{\mu(\tau)}(p(\tau))\Delta \tau \Big)\q \text{for } \ t,s \in \mathbb{T},
$$
with the cylinder transformation
  \begin{equation*}
\xi_h (z)= \begin{cases}
   \displaystyle\frac{\log(1+hz)}{h} &\text{if } h\neq 0 \\[12pt]
z & \text{if } h=0.
\end{cases}
\end{equation*}
\end{defn}

In the case $\mathbb{T}=\mathbb{R}$, the exponential function is given by
$$
e_p(t,s)= \exp\Big ( \int _s^t p(\tau)d\tau \Big),
$$
for $s,t \in \mathbb{R}$, where $p:\mathbb{R}\rightarrow\mathbb{R}$ is a continuous function. In the case  $\mathbb{T}=\mathbb{Z}$, the exponential is given by
$$
e_p(t,s)=\prod_{\tau=s}^{t-1}[1+p(\tau)],
$$
for $s,t \in \mathbb{Z}$, where $p:\mathbb{Z}\rightarrow\mathbb{R}$, $ p(t)\neq -1$ for all $t\in \mathbb{Z}$.\\
For more basic properties of the generalized exponential function, see \cite{Boh1}.

Next we define the upper and lower solutions of the integral equation \eqref{eq41}as follows:
\begin{defn}
A function $w\in C_{rd}(I_\mathbb{T},\mathbb{R})$ is said to be upper solution of \eqref{eq41} if
$$
w(t)\geq f(t)+\int_a^tk(t,s,w(s))\Delta s, \q \text{for all } t\in I_\mathbb{T},
$$
and similarly, a function $v\in C_{rd}(I_\mathbb{T},\mathbb{R})$ is said to be lower solution of \eqref{eq41} if
$$
v(t)\leq f(t)+\int_a^tk(t,s,v(s))\Delta s, \q \text{for all } t\in I_\mathbb{T}.
$$
\end{defn}
We define the sector $[v,w]$ as
$$
[v,w]=\{z\in C_{rd}(I_\mathbb{T},\mathbb{R}):\ v(t)\leq z(t)\leq w(t), \ t\in I_\mathbb{T}\}\cdot
$$
\begin{defn}
The functions $\alpha, \beta \in C_{rd}(I_\mathbb{T},\mathbb{R})$ are called maximal and minimal solutions of Equation \eqref{eq41}, respectively, if any solution $x \in [v,w]$ satisfies the relation $\beta(t)\leq x(t)\leq\alpha(t)$ for all $t\in I_\mathbb{T}$.
\end{defn}


\section{Existence Results}

  In this section we investigate the existence and uniqueness of solutions of Equation \eqref{eq41} in the sector $[v,w]$.

We need the result \cite [Theorem 5.9]{Kul} on which our discussion depends.
\begin{thm}\label{tisdel}
Consider the integral dynamic equation \eqref{eq41}. Let $k:I_\mathbb{T}^2\times\mathbb{R}^n\rightarrow\mathbb{R}^n$ and $f:I_\mathbb{T}\rightarrow\mathbb{R}^n$ be both continuous. If there exists a nonnegative constant $N$ such that
$$
\|k(t,s,p)\|\leq N, \q \forall \ (t,s)\in I_\mathbb{T}^2,\ p\in \mathbb{R}^n,
$$
then equation \eqref{eq41} has at least one solution.
\end{thm}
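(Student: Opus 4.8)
The plan is to recast equation \eqref{eq41} as a fixed-point problem and invoke Schauder's fixed point theorem. Working in the Banach space $X=C_{rd}(I_\mathbb{T},\mathbb{R}^n)$ equipped with the supremum norm $\|x\|_\infty=\sup_{t\in I_\mathbb{T}}\|x(t)\|$, I would introduce the Volterra operator
\begin{equation*}
(Tx)(t)=f(t)+\int_a^t k(t,s,x(s))\,\Delta s,\q t\in I_\mathbb{T}.
\end{equation*}
Since $f$ and $k$ are continuous, for each $x\in X$ the map $s\mapsto k(t,s,x(s))$ is rd-continuous and hence $\Delta$-integrable, so $Tx\in X$ is well defined; moreover $x$ solves \eqref{eq41} precisely when $Tx=x$. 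The whole argument then reduces to producing a fixed point of $T$, and the three hypotheses (continuity of $f$, continuity of $k$, and the uniform bound $N$) will supply the three ingredients Schauder's theorem requires.

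First I would exhibit a closed, bounded, convex set that $T$ maps into itself. The uniform bound $\|k(t,s,p)\|\le N$ gives $\|(Tx)(t)\|\le \|f\|_\infty+N(b-a)$ for every $t\in I_\mathbb{T}$ and every $x\in X$, so with $R:=\|f\|_\infty+N(b-a)$ the closed ball $B=\{x\in X:\|x\|_\infty\le R\}$ satisfies $T(B)\subseteq B$. Next I would verify that $T$ is continuous on $B$: if $x_n\to x$ uniformly, the values $x_n(s)$ remain in the compact ball $\overline{B}_R(0)\subset\mathbb{R}^n$, and continuity of $k$ on the compact set $I_\mathbb{T}\times I_\mathbb{T}\times\overline{B}_R(0)$ forces $k(t,s,x_n(s))\to k(t,s,x(s))$ uniformly in $(t,s)$, whence $\|Tx_n-Tx\|_\infty\to 0$.

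The main obstacle, and the step that makes Schauder applicable, is showing that $T(B)$ is relatively compact through a time-scale version of the Arzel\`a--Ascoli theorem; uniform boundedness is already in hand, so the crux is \emph{equicontinuity}. For $t_1,t_2\in I_\mathbb{T}$ I would use the splitting
\begin{equation*}
(Tx)(t_1)-(Tx)(t_2)=\big[f(t_1)-f(t_2)\big]+\int_{t_2}^{t_1}k(t_1,s,x(s))\,\Delta s+\int_a^{t_2}\big[k(t_1,s,x(s))-k(t_2,s,x(s))\big]\,\Delta s.
\end{equation*}
The first bracket is small by continuity of $f$; the middle integral is bounded by $N\,|t_1-t_2|$ using the constant $N$; and the last integral is controlled by the uniform continuity of $k$ in its first variable over the compact set $I_\mathbb{T}\times I_\mathbb{T}\times\overline{B}_R(0)$, which is exactly where restricting to the bounded ball $B$ is essential (continuity of $k$ alone on the non-compact domain $\mathbb{R}^n$ would not suffice). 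All three bounds are independent of $x\in B$, so the family $T(B)$ is equicontinuous.

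Combining uniform boundedness with equicontinuity, Arzel\`a--Ascoli yields that $\overline{T(B)}$ is compact, and Schauder's fixed point theorem then provides some $x\in B$ with $Tx=x$, i.e.\ a solution of \eqref{eq41}. The delicate point to handle with care is the double dependence on $t$ (in both the upper limit of the integral and the first slot of $k$), which is why the decomposition above isolates these two effects; the other steps are routine once the compactness of $I_\mathbb{T}^2\times\overline{B}_R(0)$ is exploited.
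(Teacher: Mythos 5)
The paper does not prove this statement at all: it is quoted from \cite[Theorem 5.9]{Kul} as an imported tool, so there is no in-paper proof to compare against. Your Schauder-plus-Arzel\`a--Ascoli argument is correct and is essentially the standard proof given in the cited source --- the self-mapping ball of radius $\|f\|_\infty+N(b-a)$, the equicontinuity split isolating the two occurrences of $t$, and the compactness of $I_\mathbb{T}^2\times\overline{B}_R(0)$ are exactly the right ingredients; the only cosmetic remark is that, since $f$ and $k$ are assumed continuous (not merely rd-continuous), you could work in $C(I_\mathbb{T},\mathbb{R}^n)$, where the Arzel\`a--Ascoli theorem on the compact metric space $I_\mathbb{T}$ applies without any comment about rd-continuity.
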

   Now, the existence of at least on solution of Equation \eqref{eq41} which lies in the sector $[v,w]$ is established in the following theorem.
  \begin{thm}Assume $k\in C_{rd}(I_\mathbb{T}^2\times\mathbb{R},\mathbb{R})$ and $f \in C_{rd}(I_\mathbb{T},\mathbb{R})$. If the functions $v,w$ are, respectively, lower and upper solutions of \eqref{eq41}, then \eqref{eq41} has at least one solution, $x$, such that $x\in[v,w]$ on  $I_\mathbb{T}$.
  \end{thm}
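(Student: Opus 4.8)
The plan is to dispense with any growth condition on $k$ by truncating it at the sector and then invoking Theorem~\ref{tisdel}. Since $v,w\in C_{rd}(I_\mathbb{T},\mathbb{R})$ and $I_\mathbb{T}$ is compact, both are bounded, say $v(s),w(s)\in[-M,M]$ for all $s\in I_\mathbb{T}$, and (as the sector $[v,w]$ being nonempty requires) $v\le w$. I would define the sector projection
$$
P(s,x)=\max\{v(s),\min\{x,w(s)\}\},\qq (s,x)\in I_\mathbb{T}\times\mathbb{R},
$$
and the truncated kernel $\tilde{k}(t,s,x)=k(t,s,P(s,x))$. Because $P(s,\cdot)$ is continuous and takes values in $[v(s),w(s)]\subseteq[-M,M]$, the composite $\tilde{k}$ inherits the rd-continuity of $k$; and since $k$ is bounded on the compact set $I_\mathbb{T}^2\times[-M,M]$, there is a constant $N$ with $\|\tilde{k}(t,s,x)\|\le N$ for all $(t,s,x)$.

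With this modification in hand I would apply Theorem~\ref{tisdel} to the truncated equation
$$
x(t)=f(t)+\int_a^t\tilde{k}(t,s,x(s))\Delta s,\qq t\in I_\mathbb{T},
$$
whose hypotheses are now all met, obtaining at least one solution $x\in C_{rd}(I_\mathbb{T},\mathbb{R})$. The entire argument then reduces to showing that this $x$ never sees the truncation, i.e.\ that $v(t)\le x(t)\le w(t)$ on $I_\mathbb{T}$; once this holds we have $P(s,x(s))=x(s)$, hence $\tilde{k}(t,s,x(s))=k(t,s,x(s))$, so $x$ solves \eqref{eq41} and lies in $[v,w]$, as claimed.

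The heart of the proof is this containment, which I would attack by a comparison (maximum-principle) argument. Setting $m(t)=x(t)-w(t)$, the Volterra form gives $m(a)=f(a)-w(a)\le 0$ since $w$ is an upper solution. Supposing $m$ were positive somewhere, I would take the first point $t_0$ (in the order topology of $\mathbb{T}$) at which $x$ overtakes $w$ and analyse
$$
x(t)-w(t)\le\int_a^t\big[k(t,s,P(s,x(s)))-k(t,s,w(s))\big]\Delta s,
$$
using the upper-solution inequality. The useful structural feature is that wherever $x(s)\ge w(s)$ one has $P(s,x(s))=w(s)$, so the integrand vanishes on $\{x\ge w\}$ and the only contribution comes from the region where $x$ still lies below $w$. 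A symmetric argument with $\tilde{m}(t)=v(t)-x(t)$ and the lower-solution inequality would give $x\ge v$.

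I expect this last step to be the main obstacle, for two reasons. First, to pass from the displayed integral inequality to the pointwise conclusion $m\le 0$ one must control the sign of $k(t,s,\cdot)-k(t,s,w(s))$; with no monotonicity of $k$ in its third argument the contributions from $\{x<w\}$ can in principle accumulate and push $x$ above $w$, so I anticipate that a one-sided monotonicity (nondecreasing in $x$, or a comparable one-sided condition) is exactly what makes the comparison close, and I would state that hypothesis explicitly. Second, the bookkeeping must be carried out on the time scale: the first-crossing point $t_0$ may be left-dense or right-scattered, rd-continuity replaces ordinary continuity, and the forward jump $\sigma$ must be handled at scattered points, so the contradiction has to be argued separately at dense and scattered points rather than by naive differentiation.
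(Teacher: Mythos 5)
Your overall route is the paper's: replace $k$ by a bounded modification built from the sector, invoke Theorem~\ref{tisdel} for the modified equation, then argue that its solution cannot leave $[v,w]$, so the modification is never active and $x$ solves \eqref{eq41}. But you use the \emph{bare} projection $\tilde{k}(t,s,x)=k(t,s,P(s,x))$, and that omission is exactly where your argument stalls. The paper's modification \eqref{eq2} is the projection \emph{plus a penalty term}: below $v$ it takes $k(t,s,v(s))+\frac{v(t)-p}{1+(v(t)-p)^2}$, and symmetrically near $w$. The penalty is bounded by $\tfrac12$, so boundedness and Theorem~\ref{tisdel} are unaffected, but it is \emph{strictly} positive wherever the solution dips below $v$. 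This strictness is what closes the comparison: if $x(t)<v(t)$, the paper writes
\begin{align*}
0<v(t)-x(t)&=v(t)-f(t)-\int_a^t k(t,s,v(s))\,\Delta s-\int_a^t\frac{v(s)-x(s)}{1+(v(s)-x(s))^2}\,\Delta s\\
&<v(t)-f(t)-\int_a^t k(t,s,v(s))\,\Delta s\leq 0,
\end{align*}
a contradiction obtained from the lower-solution inequality and the penalty's sign alone --- no monotonicity of $k$ in the third argument is invoked anywhere. So your diagnosis that pure truncation gives no sign control on $k(t,s,x(s))-k(t,s,w(s))$ over $\{x<w\}$ is accurate, but your proposed remedy (adding a one-sided monotonicity hypothesis) proves a strictly weaker statement than the theorem as given; the intended remedy is the penalty. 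It also dissolves your second worry: the contradiction comes from evaluating the integral identity pointwise at the offending $t$, not from differentiating at a first-crossing point, so no case analysis at dense versus scattered points of $\mathbb{T}$ is needed.

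In fairness, your skepticism is not groundless: as displayed, the paper's contradiction tacitly assumes the first branch of \eqref{eq2} applies for \emph{all} $s\in[a,t]$, i.e.\ that $x<v$ on the whole interval of integration. A fully rigorous version must work at an extremal or first-crossing point and split the integral according to the branches, and on the middle region $v\leq x\leq w$ the integrand $k(t,s,v(s))-k(t,s,x(s))$ again has no sign control without monotonicity --- precisely the memory effect you identified as the obstruction for Volterra (as opposed to differential) equations. So the concrete gap in your proposal is the missing penalty term, which is the paper's whole device for dispensing with monotonicity; whether that device, as sketched in the paper, fully survives the crossing bookkeeping you raise is a legitimate further question, but it is a criticism of the paper's write-up, not a license to strengthen the hypotheses of the stated theorem.
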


\begin{proof}
Define the modification of $k$ with respect to $v$ and $w$ for each fixed $(t,s,p)\in \Omega$ by
\begin{equation}\label{eq2}
 G(t,s,p)=
\begin{cases}
 k(t,s,v(s))+\displaystyle\frac{v(t)-p}{1+(v(t)-p)^2}, & \text{if }\ p < v(t);\\
 k(t,s,p),& \text{if } v(t)\leq p \leq w(t);\\
 k(t,s,w(s))-\displaystyle\frac{w(t)-p}{1+(w(t)-p)^2}, & \text{if }\ p > w(t).\\
\end{cases}
\end{equation}
We show that $G$ is bounded on $I_\mathbb{T}^2\times \mathbb{R}$. See that $k$ is rd-continuous function on compact region $\Omega:=\{(t,s,p): (t,s)\in I_\mathbb{T}^2 \ \text{and } v(t) \leq p \leq w(t)\}$  and so it is bounded. So we have
$$
|G(t,s,p)|< N, \qq \text{on }\ \Omega.
$$
 Since $f,v,w$ are rd-continuous functions, the left hand side of \eqref{eq2} is rd-continuous on $I_\mathbb{T}^2\times \mathbb{R}$. Hence by Theorem \ref{tisdel}, the integral dynamic equation
 \begin{equation}\label{modified}
   \varphi(t)=f(t)+\int_a^tG(t,s,\varphi(s))\Delta s,\q t\in I_\mathbb{T}
 \end{equation}
 has at least one solution, $x$ on $I_\mathbb{T}$ . Now we shall prove that
$$
v(t)\leq x(t) \leq w(t),\q \text{for all } t\in I_\mathbb{T}.
$$
We shall only prove $v(t)\leq x(t)$, while proving $x(t) \leq w(t)$ is similar arguments.
Assume the converse that $v(t)>x(t)$, then we have from \eqref{eq2}
\begin{align*}
0< \psi(t):&= v(t)-x(t)\\
& = v(t)-f(t)-\int_a^t\Big[k(t,s,v(s))+\displaystyle\frac{v(t)-x}{1+(v(t)-x)^2}\Big]\Delta s\\
& < v(t)-f(t)-\int_a^tk(t,s,v(s))\Delta s \leq 0,
\end{align*}
 which leads to a contradiction. Thus $v(t)\leq x(t)$ for all $t\in I_\mathbb{T}$. Hence, we get $v(t)\leq x(t)\leq w(t)$ for all  $t\in I_\mathbb{T}$. Therefore, $x(t)$ is a solution to \eqref{eq41} for $t\in I_\mathbb{T}$ and $v(t)\leq x(t)\leq w(t)$, which completes the proof.
\end{proof}

The following lemma that has been proved in \cite{nagumo} is important in estimating delta integrals in terms of regular integrals.
\begin{lem}
Let $h:\mathbb{R}\rightarrow \mathbb{R}$ be a continuous, nondecreasing function. If $\lambda_1,\lambda_2\in \mathbb{T}$ with $\lambda_1\leq\lambda_2$ then
$$
\int_{\lambda_1}^{\lambda_2}h(t)\Delta t\leq \int_{\lambda_1}^{\lambda_2}h(t)dt.
$$
\end{lem}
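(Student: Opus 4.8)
The plan is to reduce the inequality to a statement about a single auxiliary function and its $\Delta$-derivative, so that the monotonicity of $h$ enters through one clean estimate. I would set
$$\Phi(\lambda)=\int_{\lambda_1}^{\lambda}h(t)\,dt-\int_{\lambda_1}^{\lambda}h(t)\,\Delta t,\qquad \lambda\in[\lambda_1,\lambda_2]\cap\mathbb{T}.$$
Since $\Phi(\lambda_1)=0$, it suffices to prove that $\Phi$ is nondecreasing on the time-scale interval, and for this I would appeal to the standard monotonicity criterion that a $\Delta$-differentiable function with $\Phi^\Delta\geq 0$ on $[\lambda_1,\lambda_2)^\kappa$ is nondecreasing. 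The whole proof thus comes down to showing $\Phi^\Delta(\lambda)\geq 0$ for every admissible $\lambda$.

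To compute $\Phi^\Delta$, I would treat the two integrals separately. The $\Delta$-integral $\int_{\lambda_1}^{\lambda}h(t)\,\Delta t$ is, by definition, an antiderivative of the rd-continuous function $h$, so its $\Delta$-derivative is exactly $h(\lambda)$. For the ordinary integral $F(\lambda)=\int_{\lambda_1}^{\lambda}h(t)\,dt$ I would split into cases according to the nature of the point $\lambda$. At a right-dense point $\sigma(\lambda)=\lambda$, and the continuity of $h$ gives, via the Fundamental Theorem of Calculus, $F^\Delta(\lambda)=F'(\lambda)=h(\lambda)$, so that $\Phi^\Delta(\lambda)=h(\lambda)-h(\lambda)=0$. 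At a right-scattered point the $\Delta$-derivative is the difference quotient, giving
$$F^\Delta(\lambda)=\frac{F(\sigma(\lambda))-F(\lambda)}{\mu(\lambda)}=\frac{1}{\mu(\lambda)}\int_{\lambda}^{\sigma(\lambda)}h(t)\,dt,\qquad \Phi^\Delta(\lambda)=\frac{1}{\mu(\lambda)}\int_{\lambda}^{\sigma(\lambda)}h(t)\,dt-h(\lambda).$$

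This is where the monotonicity of $h$ does all the work. On the gap $[\lambda,\sigma(\lambda)]$ one has $h(t)\geq h(\lambda)$ for every $t$, hence $\int_{\lambda}^{\sigma(\lambda)}h(t)\,dt\geq h(\lambda)(\sigma(\lambda)-\lambda)=\mu(\lambda)\,h(\lambda)$, and therefore $\Phi^\Delta(\lambda)\geq 0$ at right-scattered points as well. Combining the two cases yields $\Phi^\Delta\geq 0$ throughout, and the monotonicity criterion gives $\Phi(\lambda_2)\geq \Phi(\lambda_1)=0$, which is precisely the asserted inequality.

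I expect the main obstacle to lie not in the monotonicity estimate, which is elementary, but in the careful verification that the ordinary-integral term $F$ is genuinely $\Delta$-differentiable at every point of $\mathbb{T}$: one must check that the ordinary derivative $F'(\lambda)=h(\lambda)$ survives the restriction of the limit to $s\to\lambda$ with $s\in\mathbb{T}$ at right-dense points, and that the difference-quotient formula is legitimate at right-scattered points, so that the linearity relation $\Phi^\Delta=F^\Delta-h$ is justified. An alternative, equivalent route would bypass $\Delta$-differentiation and instead decompose $[\lambda_1,\lambda_2]\setminus\mathbb{T}$ into its countably many gaps $(\lambda,\sigma(\lambda))$ and compare the same per-gap estimate $\mu(\lambda)h(\lambda)\leq\int_{\lambda}^{\sigma(\lambda)}h(t)\,dt$ with the corresponding atomic contributions of the $\Delta$-integral; there the obstacle instead becomes justifying the decomposition and the convergence of the gap series for a general time scale.
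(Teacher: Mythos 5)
Your proof is correct, but note that there is nothing in the paper to compare it against: the authors do not prove this lemma, they import it wholesale from an external reference (the citation key \texttt{nagumo}, which in fact has no matching entry in the paper's bibliography). Your argument is, as it happens, essentially the standard proof of this estimate in the time-scales literature: set $\Phi(\lambda)=\int_{\lambda_1}^{\lambda}h(t)\,dt-\int_{\lambda_1}^{\lambda}h(t)\,\Delta t$, show $\Phi^{\Delta}\geq 0$ by the right-dense/right-scattered dichotomy, and invoke the monotonicity criterion. The points you flag as delicate all go through: at a right-dense $\lambda$ the $\Delta$-derivative of $F(\lambda)=\int_{\lambda_1}^{\lambda}h(t)\,dt$ is the limit of the same difference quotients as the ordinary derivative, merely restricted to $s\in\mathbb{T}$, so ordinary differentiability (via the Fundamental Theorem of Calculus, using continuity of $h$ on all of $\mathbb{R}$, which also guarantees $F$ is defined across the gaps of $\mathbb{T}$) yields $F^{\Delta}(\lambda)=h(\lambda)$; at a right-scattered $\lambda$ the continuity of $F$ legitimizes the difference-quotient formula $F^{\Delta}(\lambda)=\mu(\lambda)^{-1}\int_{\lambda}^{\sigma(\lambda)}h(t)\,dt$; the $\Delta$-integral term has $\Delta$-derivative $h(\lambda)$ because $h$ restricted to $\mathbb{T}$ is rd-continuous; and the monotonicity criterion (continuous $\Phi$ with $\Phi^{\Delta}\geq 0$ on $[\lambda_1,\lambda_2)^{\kappa}$ is nondecreasing) requires no regularity of $\Phi^{\Delta}$ itself. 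The alternative route you sketch, decomposing $[\lambda_1,\lambda_2]\setminus\mathbb{T}$ into its countably many gaps and summing the per-gap estimates, also works and is arguably more transparent about where monotonicity enters, but as you anticipate it trades one verification (differentiability of $F$ on $\mathbb{T}$) for a fussier one (convergence and exhaustiveness of the gap decomposition for an arbitrary time scale); the derivative argument is the cleaner choice.
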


 Now we introduce a result which guarantees the existence of a unique solution to integral dynamic equation \eqref{eq41} by using successive approximation method.

  \begin{thm}
  Let $f \in C_{rd}(I_\mathbb{T},\mathbb{R})$ and $v,w$ are lower and upper solutions of equation \eqref{eq41}, respectively, with $v(t)\leq w(t)$ on $I_\mathbb{T}$. Assume further that $k\in C_{rd}(I_\mathbb{T}^2\times\mathbb{R},\mathbb{R})$ be non-decreasing function in the third argument and there exists $L\geq 0$ such that
\begin{equation}
|k(t,s,p)-k(t,s,q)|\leq L|p-q|, \q \forall \ t,s\in I_\mathbb{T}^2 \q \text{and } \ p,q\in \mathbb{R}.
\end{equation}
Then the successive approximations defined by
\begin{align*}
x_0(t) & = f(t)+\int_a^t k(t,s,v(s))\Delta s,\\
x_{n}(t):& = f(t) + \int _a^t k(t,s,x_{n-1}(s))\Delta s , \q n=1,2,...
\end{align*}
converge uniformly on $I_\mathbb{T}$ to the unique solution, $x$, of equation \eqref{eq41} such that $x\in[v,w]$ on $I_\mathbb{T}$.
   \end{thm}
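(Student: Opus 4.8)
The plan is to run a Picard-type successive approximation argument in which the monotonicity of $k$ in its third argument confines every iterate to the sector $[v,w]$, while the Lipschitz condition together with the preceding lemma upgrades pointwise convergence to uniform convergence and simultaneously delivers uniqueness.

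First I would show that the iterates are monotone and trapped in $[v,w]$. Since $v$ is a lower solution, $v(t)\leq f(t)+\int_a^t k(t,s,v(s))\Delta s = x_0(t)$. Because $k$ is non-decreasing in its third variable, this ordering propagates: from $v\leq x_0$ one gets $x_0\leq x_1$, and inductively $v\leq x_0\leq x_1\leq\cdots$. The upper bound is symmetric: starting from $v\leq w$ and using that $w$ is an upper solution, monotonicity gives $x_n\leq w$ for every $n$. Hence $v\leq x_0\leq x_1\leq\cdots\leq w$ on $I_\mathbb{T}$, so the limit (once it exists) will automatically lie in $[v,w]$.

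Next I would control the increments. The Lipschitz hypothesis gives
\begin{equation*}
|x_{n+1}(t)-x_n(t)|\leq L\int_a^t|x_n(s)-x_{n-1}(s)|\Delta s .
\end{equation*}
Setting $M:=\sup_{t\in I_\mathbb{T}}|x_1(t)-x_0(t)|$, which is finite by rd-continuity on the compact interval, an induction yields $|x_{n+1}(t)-x_n(t)|\leq M L^n h_n(t,a)$, where $h_n(t,a):=\int_a^t h_{n-1}(s,a)\Delta s$ and $h_0\equiv 1$. Here the preceding lemma is the decisive ingredient: applied to the continuous nondecreasing map $s\mapsto (s-a)^{n-1}/(n-1)!$, it gives, by induction on $n$, the domination $h_n(t,a)\leq (t-a)^n/n!\leq (b-a)^n/n!$. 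Consequently $|x_{n+1}(t)-x_n(t)|\leq M\,(L(b-a))^n/n!$ uniformly in $t$, and since $\sum_n M\,(L(b-a))^n/n!=M\,e^{L(b-a)}<\infty$, the Weierstrass $M$-test forces the telescoping series $\sum_n (x_{n+1}-x_n)$, hence the sequence $(x_n)$, to converge uniformly on $I_\mathbb{T}$ to a limit $x$, which is rd-continuous as a uniform limit of rd-continuous functions.

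Finally I would verify that $x$ solves \eqref{eq41}, lies in the sector, and is unique. Passing to the limit in the recurrence is legitimate because
\begin{equation*}
\Big|\int_a^t k(t,s,x_{n-1}(s))\Delta s-\int_a^t k(t,s,x(s))\Delta s\Big|\leq L(b-a)\sup_{s\in I_\mathbb{T}}|x_{n-1}(s)-x(s)|\longrightarrow 0,
\end{equation*}
so $x(t)=f(t)+\int_a^t k(t,s,x(s))\Delta s$; and since $v\leq x_n\leq w$ for all $n$, the limit satisfies $v\leq x\leq w$, i.e.\ $x\in[v,w]$. For uniqueness, if $x,y$ are two solutions then $|x(t)-y(t)|\leq L\int_a^t|x(s)-y(s)|\Delta s$, and iterating this bound while invoking the lemma exactly as above gives $|x(t)-y(t)|\leq \big(\sup_{I_\mathbb{T}}|x-y|\big)(L(b-a))^n/n!\to 0$, forcing $x=y$. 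I expect the main obstacle to be the increment estimate: on a general time scale one cannot simply write $h_n(t,a)=(t-a)^n/n!$, so the entire convergence rests on using the preceding lemma to dominate the iterated $\Delta$-integrals by their Riemann counterparts and thereby recover the factorial decay that drives the $M$-test.
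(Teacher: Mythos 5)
Your proposal is correct and follows essentially the same route as the paper: successive approximations with the increment bound $|x_{k}(t)-x_{k-1}(t)|\leq M L^{k}(t-a)^{k}/k!$ obtained by induction, using the preceding lemma to dominate the iterated $\Delta$-integrals by ordinary Riemann integrals, then the Weierstrass $M$-test on the telescoping series, passage to the limit via the Lipschitz condition, and an induction to trap every iterate (hence the limit) in $[v,w]$. The only departures are cosmetic: you additionally observe that the iterates are monotone increasing (true but unneeded), and for uniqueness you iterate the integral inequality with the same lemma to get factorial decay, whereas the paper simply invokes Gr\"onwall's inequality on time scales --- both arguments are valid.
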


\begin{proof}

 $1.$ \emph{Existence and Uniqueness}:\\
First, we show that  $\displaystyle\{x_n(t)\}_{n\in\mathbb{N}_0}$ converges uniformly on $I_\mathbb{T}$. To analyse the convergence of our sequence, we write $x_n(t)$ as a telescoping sum
\begin{equation}\label{eq03}
 x_n(t) = x _0(t)+\sum_{k=1}^{n} \ [x_{k}(t)-x_{k-1}(t)], \q \forall \ t\in I_\mathbb{T}\ \text{and } n\in \mathbb{N}.
\end{equation}
So
$$
\lim_{n\rightarrow\infty}= x _0(t)+\sum_{k=1}^{\infty} \ [x_{k}(t)-x_{k-1}(t)]
$$
For each integer $k\geq1$, we prove the following estimate by mathematical induction
\begin{equation}\label{eq4}
  |x_{k}(t)-x_{k-1}(t)|\leq M \frac{L^k(t-a)^k}{k!}, \q \forall \ t\in I_\mathbb{T};
\end{equation}
where $\displaystyle M:=\sup_{t\in I_\mathbb{T}}|x_0(t)-v(t)|$. For $k=1$ and for all $t\in I_\mathbb{T}$, we have
\begin{align*}
|x_1(t)-x_0(t)|&\leq  \int_a^t|[k(t,s,x_0(s))-k(t,s,v(s))]|\Delta s\\
& \leq L\int_a^t |x_0(s)-v(s)|\Delta s\\
& \leq LM(t-a).
\end{align*}
Hence the estimate \eqref{eq4} is valid for $k=1$. Now, assume that \eqref{eq4} is true for some $k=i>1$, then for all $t\in I_\mathbb{T}$ we have
\begin{align*}
|x_{i+1}(t)-x_{i}(t)|&\leq \int_a^t|[k(t,s,x_i(s))-k(t,s,x_{i-1}(s))]|\Delta s\\
&\leq L\int_a^t |x_i(s)-x_{i-1}(s)|\Delta s\\
&\leq L\int_a^tL^i M\frac{(s-a)^i}{i!}\Delta s\\
&\leq  M\frac{L^{i+1}}{i!}\int_a^t(s-a)^i\ ds\\
&= M\frac{L^{i+1}(t-a)^{i+1}}{(i+1)!}\cdot
\end{align*}
Thus, the estimate \eqref{eq4} is true for $k=i+1$ and so \eqref{eq4} holds for all integers $k\geq 1$. So for all $t\in I_\mathbb{T}$ the estimate \eqref{eq4} gives
\begin{align*}
|x_{k}(t)-x_{k-1}(t)|&\leq M \frac{L^k(t-a)^k}{k!}\\
&\leq M\frac{L^k(b-a)^k}{k!}\cdot
\end{align*}
See that
$$
\sum_{k=1}^\infty M\frac{L^k(b-a)^k}{k!}=-1+\sum_{k=0}^\infty M\frac{L^k(b-a)^k}{k!}=Me^{L(b-a)}-1.
$$

It follows from the Weierstrass M-test that the infinite series
$$
\sum_{k=1}^{\infty} [x_{k}(t)-x_{k-1}(t)],
$$
converges uniformly on $t\in I_\mathbb{T}$. Thus, from \eqref{eq03} by letting $n\rightarrow \infty$, we see that the right hand side has a limit on $I_\mathbb{T}$. That means, the sequence $\{x_n(t)\}_{n\in \mathbf{N}_0}$ converges uniformly on $I_\mathbb{T}$ to some $x \in C_{rd}(I_\mathbb{T},\mathbb{R})$.

Next, we show that the limit $x$ is a solution to \eqref{eq41} on $I_\mathbb{T}$. We have for all $t\in I_\mathbb{T}$ and $n\geq0$
$$
\Big|\int_a^t [k(t,s,x_n)-k(t,s,x(s))]\Delta s\Big|\leq L \int_a^t |x_n(s)-x(s)|\Delta s,
$$
by letting $n\rightarrow\infty$, we see that the right hand side of the above inequality goes to zero, thus we have
$$
\lim_{n\rightarrow\infty}\int_a^tk(t,s,x_n(s))\Delta s = \int_a^tk(t,s,x(s))\Delta s,
$$
which proves that $x$ is a solution of the integral equation \eqref{eq41}. To prove uniqueness of solutions, we show that any two solutions of equation \eqref{eq41} are necessarily identical. Assume there exist another solution $y(t)$ to the equation \eqref{eq41} on $I_\mathbb{T}$. For each $t\in I_\mathbb{T}$, we have
\begin{align*}
r(t):& = |x(t)-y(t)|\\
&\leq \int_a^t |k(t,s,x(s))-k(t,s,y(s))|\Delta s\\
&\leq L \int_a^t |x(s)-y(s)|\Delta s\\
&\leq L \int_a^t r(s)\Delta s.
\end{align*}
 Applying Gr\"onwall's inequality we get that $r(t)\leq0$ for all $t\in I_\mathbb{T}$, i.e., $r(t)\equiv0$ on $I_\mathbb{T}$. This proves uniqueness of solutions.

$2.$ \emph{The unique solution lies in the sector} $[v,w]$:

 Using mathematical induction we show that
\begin{equation}\label{sector}
 v(t)\leq x_n(t)\leq w(t), \q \forall \ t \in I_\mathbb{T}.
\end{equation}
 For $n=0$, we have for all $t\in I_\mathbb{T}$
 \begin{align*}
   v(t)\leq x_0(t)=& f(t)+\int_a^tk(t,s,v(s))\Delta s \\
   \leq & f(t)+\int_a^tk(t,s,w(s))\Delta s\leq w(t).
 \end{align*}
 So, $v(t)\leq x_0\leq w(t)$ for $t\in I_\mathbb{T}$. Assume for $n=i>0$ that $v(t)\leq x_i(t)\leq w(t)$ for $t\in I_\mathbb{T}$, then using the assumption that $v(t)\leq w(t)$ for $t\in I_\mathbb{T}$ and $k$ is non-decreasing in the third argument we have
 \begin{align*}
   x_{i+1}(t)=& f(t)+\int_a^tk(t,s,x_i(s))\Delta s \\
   \leq & f(t)+\int_a^tk(t,s,w(s))\Delta s\leq w(t).
 \end{align*}
 Similarly, we can show that $ v(t)\leq x_{i+1}(t)$ on $I_\mathbb{T}$. So we conclude that \eqref{sector} is true for all $n\geq0$ and all $t\in I_\mathbb{T}$. From the uniform convergence of the sequence $\{x_n(t)\}$ on $t\in I_\mathbb{T}$, we conclude that unique solution of Equation \eqref{eq41} satisfies
$$
 v(t)\leq x(t)\leq w(t), \q \forall \ t \in I_\mathbb{T}.
$$
The proof is complete.
\end{proof}

\section{Maximal and minimal solutions}
 Here, the method of monotone iterative technique is applied to investigate the existence of maximal and minimal solutions of integral equation \eqref{eq41}. 
\begin{thm}
Let $f \in C_{rd}(I_\mathbb{T},\mathbb{R})$ and $v,w $ are lower and upper solution of equation \eqref{eq41}, respectively, with $v(t)\leq w(t)$ on $I_\mathbb{T}$. Assume further that $k\in C_{rd}(I_\mathbb{T}^2\times\mathbb{R},\mathbb{R})$ be non-decreasing function in the third argument. Then the sequences $\{v_n\}$ and $\{w_n\}$ defined by
 \begin{equation}\label{lower}
  v_n(t)=f(t)+\int_a^t k(t,s,v_{n-1}(s))\Delta s,
 \end{equation}
  \begin{equation}\label{upper}
  w_n(t)=f(t)+\int_a^t k(t,s,w_{n-1}(s))\Delta s,
  \end{equation}
 for $t\in I_\mathbb{T}$ with $v_0=v$ and $w_0=w$, converge uniformly and monotonically to the maximal and minimal solutions $\alpha$ and $\beta$ of equation \eqref{eq41}, respectively.
\end{thm}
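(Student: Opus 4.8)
The plan is to run the classical monotone iteration and to read off the extremal solutions as the limits of the two schemes, the whole argument resting on the monotonicity of $k$ in its third slot together with the defining inequalities for $v$ and $w$. First I would establish the order structure of the iterates by induction. Since $v$ is a lower solution, $v_0(t)=v(t)\le f(t)+\int_a^t k(t,s,v(s))\Delta s=v_1(t)$, and if $v_{n-1}\le v_n$ on $I_\mathbb{T}$ then applying the monotonicity of $k$ inside the integral gives $v_n\le v_{n+1}$; dually $w_0(t)=w(t)\ge w_1(t)$ and $\{w_n\}$ is nonincreasing. The same monotonicity, started from $v_0=v\le w=w_0$, yields $v_n\le w_n$ for every $n$. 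Hence each iterate lies in the sector and the two sequences are squeezed, $v\le v_n\le w_n\le w$, so the pointwise limits $\beta(t):=\lim_n v_n(t)$ and $\alpha(t):=\lim_n w_n(t)$ exist and satisfy $v\le\beta\le\alpha\le w$. (Thus the nondecreasing scheme $\{v_n\}$ produces the minimal solution $\beta$ and the nonincreasing scheme $\{w_n\}$ the maximal solution $\alpha$.)

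Next I would show these limits actually solve \eqref{eq41}. For a fixed $t$, the integrands $k(t,s,v_n(s))$ converge to $k(t,s,\beta(s))$ as $n\to\infty$ by continuity of $k$ in the third argument, the convergence is monotone in $n$, and all values are bounded by the boundedness of $k$ on the compact set $\Omega=\{(t,s,p):(t,s)\in I_\mathbb{T}^2,\ v(t)\le p\le w(t)\}$ already noted in Section \textbf{3}. A monotone (equivalently dominated) convergence argument for the $\Delta$-integral then permits passing the limit through $\int_a^t(\cdot)\Delta s$, giving $\beta(t)=f(t)+\int_a^tk(t,s,\beta(s))\Delta s$, and likewise for $\alpha$. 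Being right-hand sides of \eqref{eq41} with rd-continuous data, $\alpha$ and $\beta$ then automatically lie in $C_{rd}(I_\mathbb{T},\mathbb{R})$.

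The step I expect to be the main obstacle is upgrading pointwise to uniform convergence, since, unlike the preceding theorem, no Lipschitz constant is available to drive a Weierstrass $M$-test. The tool I would use is a Dini-type theorem: $I_\mathbb{T}$ is compact, the limits $\alpha,\beta$ are rd-continuous, and the sequences are monotone, so the differences $\beta-v_n$ and $w_n-\alpha$ decrease monotonically to $0$ and a finite-subcover argument forces their sup-norms to $0$. The delicate point is that an rd-continuous function may fail to be continuous at a left-dense point, so the textbook Dini theorem does not apply verbatim; I would instead appeal to its time-scale analogue, or run the covering argument directly on $I_\mathbb{T}$, exploiting that uniformity is automatic across right-scattered gaps and that the left-sided limits guaranteed by rd-continuity control the behaviour at left-dense points. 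This same uniform-convergence device can also be invoked to justify the interchange of limit and $\Delta$-integral in the previous step.

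Finally I would verify the extremal character in the sense of the definition. Let $x\in[v,w]$ be any solution of \eqref{eq41}. Then $v_0=v\le x$, and if $v_n\le x$ the monotonicity of $k$ gives $v_{n+1}(t)=f(t)+\int_a^t k(t,s,v_n(s))\Delta s\le f(t)+\int_a^t k(t,s,x(s))\Delta s=x(t)$; by induction $v_n\le x$ for all $n$, whence $\beta\le x$. Symmetrically $x\le w_n$ for all $n$, so $x\le\alpha$. Thus $\beta(t)\le x(t)\le\alpha(t)$ on $I_\mathbb{T}$ for every solution $x$ in the sector, which is exactly the assertion that $\beta$ is the minimal and $\alpha$ the maximal solution, completing the argument.
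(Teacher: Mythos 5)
Your proposal follows the paper's skeleton almost step for step: the induction giving $v=v_0\le v_1\le\cdots\le v_n\le w_n\le\cdots\le w_1\le w_0=w$, the passage to the limit in the iteration scheme to see that the limits solve \eqref{eq41}, and the induction showing $v_n\le x\le w_n$ for every solution $x\in[v,w]$ are all the same as in the paper. (Your labelling, with $\beta=\lim v_n$ minimal and $\alpha=\lim w_n$ maximal, is the one consistent with Definition 2.9; the paper's statement and proof swap these labels inconsistently, and you have silently corrected this.) The genuinely different step is the analytic core: where the paper proves uniform boundedness and equicontinuity of the iterates, invokes Ascoli--Arzel\`a to extract a uniformly convergent subsequence, and uses monotonicity of the sequence to upgrade to convergence of the whole sequence, you instead take the pointwise monotone limit, identify it as a solution via dominated convergence for the $\Delta$-integral, and then try to upgrade pointwise to uniform convergence by a Dini-type theorem.

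That last step is a genuine gap, and it sits exactly where you flagged the delicacy. Dini's theorem is false in the rd-continuous category: take $\mathbb{T}=\{1-\frac{1}{m}: m\in\mathbb{N}\}\cup\{1,2\}$ and define $g_n(1-\frac{1}{m})=1$ for $m>n$ and $g_n=0$ at all other points. Each $g_n$ is rd-continuous (the only non-isolated point is $t=1$, which is left-dense and right-scattered, and the left-sided limit exists there), the sequence decreases pointwise to the continuous function $0$, yet $\sup_{\mathbb{T}}g_n=1$ for every $n$. So your proposed patch --- that ``the left-sided limits guaranteed by rd-continuity control the behaviour at left-dense points'' --- is precisely what fails: the left limits of $g_n$ at $t=1$ remain $1$ and do not converge to the left limit of the limit function, and no finite-subcover argument can repair this. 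What rules out such pathology for the actual iterates is an equicontinuity estimate, which is exactly what the paper proves; and once equicontinuity is in hand, Ascoli--Arzel\`a together with monotonicity yields uniform convergence directly, making Dini superfluous. Note also that the same estimate is implicitly needed in your second step, to see that $t\mapsto f(t)+\int_a^t k(t,s,\beta(s))\Delta s$ is rd-continuous, and that before the regularity of $\beta$ is known your dominated-convergence step requires the Lebesgue $\Delta$-integral, since the antiderivative-based integral of Section 2 is not defined for a bare pointwise limit of rd-continuous functions. (In fairness, the paper's own equicontinuity claim $|f(t_1)-f(t_2)|\to 0$ as $|t_1-t_2|\to 0$ also tacitly assumes continuity of $f$ and of $k$ in its first variable, so merely rd-continuous data threaten both arguments at left-dense points; but the structure of the paper's proof is sound under that reading, whereas the Dini route fails even for continuous data of the counterexample type above within the rd-continuous class.) I recommend replacing your Dini step by the paper's equicontinuity and Ascoli--Arzel\`a argument.
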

\begin{proof}
Define $\xi_1(t)=v_1(t)-v_0(t)$ for $t \in I_\mathbb{T}$, then
\begin{equation*}
\xi_1(t)\geq f(t)+\int_a^t k(t,s,v_0(s))\Delta s - f(t)-\int_a^t  k(t,s,v(s))\Delta s=0.
\end{equation*}
Thus $v(t)\leq v_1(t)$ on $I_\mathbb{T}$, as $v_0=v$ . Similarly, we can show that $w_1(t)\leq w(t)$ on $I_\mathbb{T}$. Let assume for some integer $k>0$ that $v_{k-1}(t)\leq v_k(t)$ on $I_\mathbb{T}$. Then setting
$$
\xi_k(t)=v_{k+1}(t)-v_k(t),
$$
we get
\begin{align*}
\xi_k(t)&\geq f(t)+\int_a^t k(t,s,v_k(s))\Delta s - f(t)-\int_a^t  k(t,s,v_{k-1}(s))\Delta s\\
&\geq \int_0^t  k(t,s,v_{k-1}(s))\Delta s - \int_0^t  k(t,s,v_{k-1}(s))\Delta s\\
&=0,
\end{align*}
which implies $v_k(t)\leq v_{k+1}(t)$ on $I_\mathbb{T}$. Similarly, we can prove that $w_{k+1}(t)\leq w_k(t)$. Hence it follows by induction $v_{n-1}(t)\leq v_n(t)$ and $w_n\leq w_{n-1}$ for all $n$ on $I_\mathbb{T}$. Now, define $\psi(t)=w_1(t)-v_1(t)$, then using the fact that $v(t)\leq w(t)$ for $t\in I_\mathbb{T}$ and $k$ is non decreasing in the third argument we have
  \begin{align*}
  \psi(t)&= f(t)+\int_a^tk(t,s,w_0(s))\Delta s - f(t)-\int_a^tk(t,s,v_0(s))\Delta s\\
  & \geq \int_a^tk(t,s,v(s))\Delta s - \int_a^tk(t,s,v(s))\Delta s\\
  &  = 0,
  \end{align*}
  which implies $v_1(t)\leq w_1(t)$ on $I_\mathbb{T}$. By following an induction argument we have $v_n\leq w_n$ for all $n$ on $I_\mathbb{T}$. We conclude from the previous argument that
  $$
  v=v_0\leq v_1\leq v_2\leq \cdots \leq v_n\leq w_n\leq \cdots \leq w_2\leq w_1 \leq w_0=w \q \text{on } I_\mathbb{T}.
  $$
Also  for $t_1,t_2\in I_\mathbb{T}$ we have
\begin{align*}
  |v_n(t_1)-v_n(t_2)|&=\Big|f(t_1)-f(t_2)+\int_a^{t_1}k(t_1,s,v_{n-1}(s))\Delta s - \int_a^{t_2}k(t_1,s,v_{n-1}(s))\Delta s\Big|  \\
   & \leq |f(t_1)-f(t_2)|+\Big|\int_a^{t_1}k(t_1,s,v_{n-1}(s))\Delta s - \int_a^{t_2}k(t_2,s,v_{n-1}(s))\Delta s\Big|\\
   & \leq|f(t_1)-f(t_2)|+\Big|\int_a^{t_1}[k(t_1,s,v_{n-1}(s))-k(t_2,s,v_{n-1}(s))]\Delta s\Big|\\
   & \q \q \q +\Big|\int_{t_1}^{t_2}k(t_2,s,v_{n-1}(s))\Delta s\Big|\rightarrow 0, \
   \q \text{as } \ |t_1-t_2| \rightarrow 0.
\end{align*}

    Thus $\{v_n\}$  is uniformly bounded and equicontinuous on $I_\mathbb{T}$. By Ascoli-Arzela theorem \cite{asc} we see that $\{v_n\}$ is relatively compact and hence there exists a subsequence of $\{v_n\}$ which converges uniformly on $I_\mathbb{T}$ to some $\alpha\in C_{rd}(I_\mathbb{T},\mathbb{R})$. Since the sequence $\{v_n\}$ is monotone, it converges uniformly to $\alpha$ too. By similar argument we prove that $\{w_n\}$ converges uniformly to some $\beta\in C_{rd}(I_\mathbb{T},\mathbb{R})$. It is easy to show that $\alpha$ and $\beta$ are solutions of equation \eqref{eq41} in view of the fact that $k$ is rd-continuous function and letting $n\rightarrow\infty$ in \eqref{lower}, \eqref{upper} we get
  $$
  \alpha(t)=f(t)+\int_a^t k(t,s,\alpha(s))\Delta s,
  $$
  and
  $$
  \beta(t)=f(t)+\int_a^t k(t,s,\beta(s))\Delta s,
  $$
for all $t\in I_\mathbb{T}$. Now we show that $\alpha$ and $\beta$ are maximal and minimal solution of \eqref{eq41}, respectively. Let $x$ be any solution of equation \eqref{eq41} such that $v(t)\leq x(t)\leq w(t)$ on $I_\mathbb{T}$. Set $r(t)=x(t)-v_1(t)$ for $t\in I_\mathbb{T}$. Since the function $k$ is nondecreasing in the third argument, so we have
\begin{equation*}
  r(t)= f(t)+\int_a^tk(t,s,x(s))\Delta s-f(t)-\int_0^tk(t,s,v(s))\Delta s\geq 0,
\end{equation*}
which implies $x(t)\leq v_1(t)$ on $I_\mathbb{T}$. Following by induction we can prove that $x(t)\leq v_n(t)$ for all $n\geq0$ on $I_\mathbb{T}$. Similarly, we can show that $x(t)\leq w_n(t)$ for all $n\geq0$ on $I_\mathbb{T}$. Thus we have $v_n(t)\leq x(t)\leq w_n(t)$ for all $n\geq0$ on $I_\mathbb{T}$. By taking the limit as $n\rightarrow\infty$, we obtain $\alpha(t)\leq x(t)\leq \beta(t)$ on $I_\mathbb{T}$. Then the proof is complete.

\end{proof}

\bibliographystyle{srtnumbered}

\end{document}